\newtheorem{thm}{Corollary}
\newtheorem{Theorem}[thm]{Theorem}
\newtheorem{lemma}{Lemma}
\newtheorem{lemma2}[lemma]{Lemma}
\newtheorem{lemma3}[lemma]{Lemma}
\newtheorem{lemma4}[lemma]{Lemma}
\newtheorem{lemma5}[lemma]{Lemma}
\newtheorem{thm2}[thm]{Theorem}
\newtheorem{corol}[thm]{Corollary}
\newtheorem{corol2}[thm]{Corollary}
\newtheorem{corol3}[thm]{Corollary}
\newtheorem{corol4}[thm]{Corollary}
\newtheorem{rmk1}{Remark}
\newtheorem{rmk2}[rmk1]{Remark}
\newcommand{\R}{{\mathfrak C}}
\newcommand{\A}{\mathcal {A}}
\newcommand{\M}{\mathcal {M}}
\newcommand{\sym}{\textup{sym}}
\begin{document}

\title{Upper bounds on $L$-functions at the edge of the critical strip}

\author{Xiannan Li}
\email{xli@math.stanford.edu}

\address{Department of Mathematics, Stanford University, Stanford, CA 94305}

\date{\today}

\maketitle
\section{Introduction}
In this paper, we are concerned with establishing bounds for $L(1)$ where $L(s)$ is
a general $L$-function, and specifically, we shall be most interested in the case
where no good bound for the size of the coefficients of the $L$-function is known. 
In this case, results are available due to Iwaniec \cite{I1}, \cite{I2}, and Molteni
\cite{Mol}, but this type of investigation is still in its infancy, and the
limitations of the methods available to analytic number theorists is unclear.

The value of $L(1)$ has historically been one of great interest.  The first
interesting examples of bounds for $L(1)$ come from Dirichlet $L$-functions with
non-principal character.  Let $L(s, \chi)$ denote a Dirichlet L-function where
$\chi$ is a non-principal Dirichlet character with modulus $q$.  In the case that
$\chi$ is not a real quadratic character, it is easy to show that 
\begin{equation}
\frac{1}{\log q} \ll L(1, \chi) \ll \log q.
\end{equation}
When $\chi$ is a real quadratic character, 
\begin{equation}
\frac{1}{q^\epsilon} \ll L(1, \chi) \ll \log q,
\end{equation}
where the lower bound is an ineffective result due to Siegel.  By Dirichlet's Class
Number Formula, these also provide information on the class number of
$\mathbb{Q}\:(\sqrt{\chi(-1)q})$, which was one of the classical motivations for
studying $L(1, \chi)$.

Assuming GRH, it can be proven that
\begin{equation}
\frac{1}{\log \log q} \ll L(1, \chi) \ll \log \log q,
\end{equation}
and this is the best possible result.  Indeed, assuming GRH, it is known that $L(1,
\chi)$ is well approximated by a short Euler product, from which the bounds follow.

Far less is known about more general $L$-functions.  As mentioned before, the main
technical difference lies with bounds for the coefficients of the $L$-functions. 
The Dirichlet $L$-function, $L(s, \chi)$, has coefficients which are of size $1$. 
It is conjectured that a more general $L$-function would also have coefficients at
primes bounded by a constant, and this is referred to as Ramanujan's conjecture. 
Assuming Ramanujan's conjecture, one can once again prove bounds of the similar
strength as in (1) save in some exceptional cases in which case the bounds are the
same as in (2).  For instance, if $L(s)$ is an $L$-function attached to a
holomorphic modular form of weight $k \geq 1$ and level $n$, then 
\begin{equation*}
L(s) \ll \log^2 (nk^2).
\end{equation*}

Assuming the Riemann Hypothesis for $L(s)$ in addition to Ramanujan's conjecture
will produce the same type of bounds as in (3).  

Bounds for general $L$-functions will be in terms of $\R$, the conductor of $L(s)$. 
We will describe the conductor in more detail in Section 2, but comment now that in
the case of Dirichlet $L$-functions, the conductor is of size $q$, and for the case
of $L$-functions attached to holomorphic modular forms, the conductor is of size
$nk^2$.  

Unfortunately, Ramanujan's conjecture is far from known for $L$-functions in
general.  In this paper, we are interested in obtaining unconditional bounds for
$L(1)$.  In this direction, Iwaniec \cite{I1} showed for $\pi$ cusp form for
$GL(2)$, and Molteni \cite{Mol} showed for $\pi$ cusp form for $GL(n)$ that
\begin{equation}
L(1, \pi) \ll_\epsilon \R^\epsilon.
\end{equation}
Iwaniec's method cleverly accomplishes this by using the multiplicative relations
between the coefficients.  Essentially if the coefficients of $L$ were too large up
to $O(\R^\epsilon)$, then many coefficients after $\R^\epsilon$ would also be too
large through the multiplicative relations.  Molteni's work is a generalization of
Iwaniec's method to higher dimensions, and involves more complicated algebra and
combinatorics.

For this method to work, both Iwaniec and Molteni used the existence of the
Rankin-Selberg $L$-function $L(s,\pi \times \tilde{\pi})$.  In Theorem \ref{main},
we will not require this assumption to prove that  
\begin{equation}
L(1) \ll \exp\left(C \frac{\log \R}{\log\log \R}\right).
\end{equation}Thus, Theorem \ref{main} holds for $L$-functions for which the
existence of the Rankin-Selberg $L$-function is not known.  We will also prove
stronger bounds such as 
\begin{equation}
L(1, \pi) \ll \exp(C \sqrt{\log \R}),
\end{equation}using the existence of $L(s, \pi \times \tilde{\pi})$.  The proof of
Theorem \ref{main} is analytic, with methods from multiplicative number theory,
notably that of Soundararajan in \cite{Sound}.  

One interesting phenonmenon here is that assuming GRH for $L(s)$ does not appear to
give us significant improvements in upper bounds for $L(1)$, without assuming
Ramanujan.  Recall that assuming GRH in the case of $L$-functions for which
Ramanujan is known gives significant improvements in bounds for $L(s)$, of which
Equation (3) is an example.  This was possible because knowing GRH for $L(s)$ allows
us to express the $L$-function as a short Euler product.  In our case, assuming GRH,
we may still reduce $L(1)$ to a short Euler product, but since we have mediocre
control over the size of the coefficients of $L(s)$, it remains difficult to find a
good upper bound.  We will return to this later.  Now, we further illustrate our
results through a few specific examples.

\subsection{Application to coefficients of Maass forms}
Let $\Gamma = \Gamma_0(N)$ be the Hecke congruence subgroup of level $N$, and $\chi$
an even Dirichlet character with modulus $N$.  Let $S_0(\Gamma, \chi)$ be the space
of Maass cusp forms for $\Gamma$ of weight zero and character $\chi$, and $f\in
S_0(\Gamma, \chi)$ be a newform with eigenvalue $\lambda$ for the Laplacian.  Then
$f$ has a Fourier expansion of the form
$$f(z) = \sum_{n\neq 0} \rho(n) |n|^{-1/2} W(nz),
$$where $W$ is the Whittaker function.  Following standard notation, if we let
$\lambda = 1/4 + t^2$, then in terms of the K-Bessel function $K_{it}$, 
$$W(z) = (|y|\cosh \pi t)^{1/2}K_{it}(2\pi |y|)\exp (2\pi i x).
$$
If we let $a(n)$ denote the eigenvalue of $f$ for the $n^{th}$ Hecke operator $T_n$,
then $\rho(n) = \rho(1)a(n)$.  Thus, $\rho(1) \neq 0$.  

There are two natural ways to normalize $f$.  The first is so that the Peterson norm
$||f|| = 1$, and the second is so that the first Fourier coefficient $\rho(1) = 1 =
a(1)$.  If we normalize $f$ so that $||f|| = 1$, what can the size of $\rho(1)$ be? 
It turns out that this is intimately related to the residue of $L(s, f\times f)$ at
$s=1$ where $f\times f$ is the Rankin-Selberg convolution of $f$ with itself. 
Specifically 
$$
\textup{Res}_{s=1} L(s, f\times f)  = \frac{2\pi}{3}|\rho(1)|^{-2}.
$$
See \cite{HoffLock} for a nice discussion of this.  
As mentioned before, Iwaniec had showed in \cite{I1} that $ |\rho(1)|^2 \gg_\epsilon
(\lambda N)^{-\epsilon}$, which is equivalent to the bound $\textup{Res}_{s=1} L(s,
f\times f) \ll_\epsilon (\lambda N)^\epsilon$.  One of the results in this paper is

\begin{thm}\label{examplecor}
Let $f$ be a Maass cusp form of level $N$, and eigenvalue $\lambda$.  We have that
\begin{equation}
\textup{Res}_{s=1} L(s, f\times f)   \ll\exp \left( C(\log \lambda N)^{1/4}
(\log\log\lambda N)^{\frac{1}{2}} \right).
\end{equation}
for some positive constant $C$, and it follows that  
\begin{equation}
|\rho(1)| \gg_\epsilon \exp( -C'(\log(\lambda N))^{1/4+\epsilon}),
\end{equation}  
where the positive constant $C' = C/2$.
\end{thm}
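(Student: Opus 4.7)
The starting point is the factorization
$$L(s, f \times f) \;=\; \zeta(s)\, L(s, \sym^2 f),$$
coming from the tensor-square decomposition for $GL(2)$. Since $\sym^2 f$ is automorphic on $GL(3)$ by Gelbart--Jacquet, $L(s, \sym^2 f)$ is entire and nonvanishing at $s=1$, so
$$\textup{Res}_{s=1} L(s, f \times f) \;=\; L(1, \sym^2 f).$$
The problem therefore reduces to bounding $L(1, \pi)$ for $\pi = \sym^2 f$, whose analytic conductor is polynomial in $\lambda N$, so $\log \R \asymp \log \lambda N$.

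To this $\pi$ I would apply Theorem \ref{main} in its strongest form. Since $\pi$ is essentially self-dual, the Rankin--Selberg $L(s, \pi \times \tilde\pi)$ exists, and using the Clebsch--Gordan identity $\sym^2 \otimes \sym^2 = \sym^4 \oplus \sym^2 \oplus \mathbf{1}$ together with Kim's theorem that $\sym^4 f$ is automorphic on $GL(5)$, it factors as
$$L(s, \sym^2 f \times \widetilde{\sym^2 f}) \;=\; \zeta(s)\, L(s, \sym^2 f)\, L(s, \sym^4 f).$$
All factors are known automorphic $L$-functions, giving us complete access to the Rankin--Selberg hypothesis behind equation~(6), and, through Kim--Shahidi's automorphy of $\sym^3 f$, to a further level of structure.

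A direct application of equation~(6) to $\pi = \sym^2 f$ would produce only the weaker bound $\exp(C\sqrt{\log \lambda N})$. To sharpen the exponent from $1/2$ to $1/4$, I would iterate the Rankin--Selberg input. The $\sqrt{\log \R}$ bound arises from Cauchy--Schwarz applied to an estimate of the form $\sum_{p \le X} |a_p(\pi)|^2/p \ll \log\log X$ that one extracts from $L(s, \pi \times \tilde\pi)$. Having the additional automorphic $L$-functions $L(s, \sym^3 f)$ and $L(s, \sym^4 f)$ gives control on higher prime moments $\sum_{p \le X} |a_p(\pi)|^{2k}/p$, and a H\"older-type optimization in $X$ and $k$ against the length of the short Euler product in Soundararajan's method is what should yield the refined estimate $\exp(C(\log \lambda N)^{1/4}(\log\log \lambda N)^{1/2})$.

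The lower bound on $|\rho(1)|$ is then immediate from the identity $\textup{Res}_{s=1} L(s, f \times f) = \tfrac{2\pi}{3}|\rho(1)|^{-2}$ recalled in the introduction: invert, take a square root, and absorb the $(\log\log \lambda N)^{1/2}$ factor into $\epsilon$ in the exponent of $\log \lambda N$, so that $C' = C/2$ suffices. The principal technical obstacle lies in the iterated optimization of step three: one must feed the higher symmetric power Rankin--Selberg inputs back into the analytic machinery of Theorem \ref{main} in such a way that the balance between the truncation parameter and the moment order produces exactly the exponent $1/4$, rather than the naive $1/2$ obtained from a single application of equation~(6).
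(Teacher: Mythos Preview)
Your reduction to bounding $L(1,\sym^2 f)$ and your invocation of Kim's automorphy of $\sym^4 f$ are exactly right, and the lower bound for $|\rho(1)|$ follows as you say. However, the decisive third step is left vague, and the specific mechanism you sketch (Clebsch--Gordan for $\sym^2\otimes\sym^2$, plus $\sym^3$, plus an unspecified ``iteration'' of equation~(6)) is not how the exponent $1/4$ actually emerges.

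The paper does not iterate the $\sqrt{\log\R}$ bound, nor does it use $\sym^3 f$ or the factorization of $L(s,\sym^2 f\times\widetilde{\sym^2 f})$. Instead it establishes a single pointwise coefficient inequality: writing $s(n)$, $a(n)$, $b(n)$, $d(n)$ for the Dirichlet coefficients of $\sym^2 f$, $f$, $\sym^4 f$, and $\sym^4 f\times\overline{\sym^4 f}$ respectively, one has at prime powers
\[
|s(n)|^4 \ll |a(n)|^8 \ll |b(n)|^2 \le d(n),
\]
the middle step being the elementary inequality from the proof of Corollary~\ref{sympower} with $l=4$. This feeds directly into a single H\"older step with exponent $4$ inside Lemma~3, and the auxiliary $L$-function that gets bounded by Theorem~\ref{main} is $L(s,\sym^4 f\times\overline{\sym^4 f})$, not $L(s,\sym^2 f\times\widetilde{\sym^2 f})$. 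The exponent $1/4$ then drops out immediately: $(\log\R/\log\log\R)^{1/4}(\log\log\R)^{3/4}=(\log\R)^{1/4}(\log\log\R)^{1/2}$. So the missing idea in your proposal is precisely this choice of auxiliary object---the Rankin--Selberg square of $\sym^4 f$---together with the explicit coefficient comparison that makes a single H\"older application suffice. Your Clebsch--Gordan route could perhaps be massaged into something equivalent, but as written it does not isolate a fourth-moment bound on the $\sym^2$ coefficients, which is what is actually needed. (One minor further point: the identity $L(s,f\times f)=\zeta(s)L(s,\sym^2 f)$ fails at ramified primes; the paper notes that the discrepancy at $\ll\log(\lambda N)$ primes is harmless, and you should too.)
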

In \cite{HoffLock}, Hoffstein and Lockhart prove a good lower bound for
$\textup{Res}_{s=1} L(s, f\times f)$, and hence a good upper bound for $|\rho(1)|$. 
Together with the work of Goldfeld, Hoffstein and Lieman in \cite{GHL}, their
results say that 
$$|\rho(1)|^2 \ll \log (\lambda N + 1)$$
where the implied constant is effective, save in the case where $f$ is a lift from
$GL(1)$, in which case the bound is of the form 
$$|\rho(1)|^2 \ll_\epsilon (\lambda N)^{\epsilon},$$ and the constant is no longer
effective.

\subsection{Lower and upper bounds for Maass forms}
One of the motivations for finding upper bounds for $L$-functions at $1$ is that
they are useful for finding corresponding lower bounds.  In \cite{Mol}, Molteni
sketches an approach for proving lower bounds of the form 
\begin{equation}\label{moltenilower}
L(1) \gg_\epsilon (\lambda N)^{-\epsilon}
\end{equation} 
for $L$ corresponding for a Maass cusp form for the Hecke congruence group of level
$N$, where, as before, $\lambda$ is the eigenvalue of the Laplacian for $f$.  The
proof of such lower bounds use upper bounds of the same strength as proven by
Iwaniec in \cite{I1}, but for higher degree $L$-functions than those considered in
\cite{I1}.  Specifically, the relevant $L$-functions are Rankin-Selberg
$L$-functions of the original $L$-function and its symmetric powers.  Molteni's
paper concentrated on proving such upper bounds for general $L$-functions.

Deriving better lower bounds than Equation (\ref{moltenilower}) is difficult even
with better upper bounds in the case where $L$ might have an exceptional zero. 
However, an improvement on the upper bounds will allow improvements on lower bounds
in the cases where exceptional zeros have been ruled out.  For instance, with $f$ as
in \S 1.1 and if $f$ is not a lift from $GL(1)$, we can show that 
\begin{equation}\label{lowerboundeqn}
L(1) \gg \exp(-C \sqrt{\log \lambda N}).
\end{equation}
Moreover, similar results can be shown for other $f$ which are not self-dual.  This
is the subject of Corollary \ref{lowerbound} and Remark 2.  Note that the bound in
\cite{HoffLock} for the special case of $\textup{Res}_{s=1} L(s, f\times f)$ is
superior.  It does not appear to be easily generalized since the proof depends
crucially on the non-negativity of the coefficients.

As would be expected, we obtain better bounds if we have more information about the
functoriality of symmetric powers of $L(s)$.  Improved upper bounds are covered in
our Corollary \ref{sympower}, and we briefly describe the situation with improved
lower bounds in Remark 2.  As an example, let $L(s)$ be automorphic for $GL(2)$ and
$\R$ denote the conductor.  Then based on the work of Kim \cite{Kim} on the
functoriality of the symmetric fourth power for $GL(2)$, it will follow from
Corollary \ref{sympower} that
\begin{equation}\label{betteruppereqn}
L(1) \ll_\epsilon \exp((\log \R)^{1/8 + \epsilon}).
\end{equation}
$\\$

\subsection{Convexity bounds for Rankin-Selberg $L$-functions}
Let $\pi_1$ and $\pi_2$ be cuspidal automorphic representations of $GL(n)$ and
$GL(m)$.  Let $L(s, \pi_1 \times \pi_2)$ denote the Rankin-Selberg L-function and
let $R(\pi_1, \pi_2)$ denote $L(1, \pi_1 \times \pi_2)$ if $L(s, \pi_1 \times
\pi_2)$ has no pole at $s=1$ and the residue at $s=1$ otherwise.  Let $\R$ denote
the conductor of $L(s, \pi_1 \times \pi_2)$.

The standard convexity bound of $R \ll_\epsilon \R^\epsilon$ is known
unconditionally for $m, n \leq 2$ by the work of Molteni \cite{Mol} mentioned above. Molteni proved this result for all Rankin-Selberg $L$-functions satisfying certain strong assumptions on the size of the parameters which are currently unknown beyond $GL(2)$.  Brumley \cite{BrConvex} extended this to $m, n\leq 4$ unconditionally by replacing the condition of Molteni by the existence of a certain strong isobaric lift which is known up to $GL(4)$.  Our Theorem \ref{main} gives that there exists some constant $C>0$ such that
$$R(\pi_1, \pi_2) \ll \exp\left( C \frac{\log \R}{\log \log \R}\right),
$$for all positive integers $m$ and $n$, thus extending this result to all of these Rankin-Selberg $L$-functions.  Further, we note that such a bound is shown in Theorem \ref{main} for all $L$-functions satisfying the standard conditions for an $L$-function.  

Finally, we mention here that this convexity bound for $L(s, \pi_1\times \pi_2)$ has several interesting and immediate applications.  Brumley describes an extension of the zero density result of Kowalski and Michel \cite{KoMi} to all cusp forms on $GL(n)$ over $\mathbb{Q}$ to $n\leq 4$ based on his work in \cite{BrConvex} which we can now extend to all $n$.  See Corollary 4 in \cite{BrConvex} for more details.  In \cite{BrMulOne}, Brumley proves an effective strong multiplicity result which states that two cusp forms are the same if they agree on all the spherical non-archimedean places with norm bounded by $\R^A$ for some $A>0$.  Brumley did not give a specific value for $A$ because in general he had to rely on preconvex bounds for Rankin-Selberg L-functions.  Applying Brumley's method with our bounds would give an improvement in the value of $A$.

\subsection{Overview of paper}
Now we give a very quick overview of the paper.  In \S \ref{formaldefinitions}, we
will introduce the technical formalities needed to state Theorem \ref{main}.  In \S
\ref{proofofmain}, we will prove Theorem \ref{main}.  In \S \ref{betteruppersec}, we
derive further upper bounds, including the stronger bounds in
(\ref{betteruppereqn}).  Lower bounds are the subject of \S \ref{lowerboundssec}, of
which (\ref{lowerboundeqn}) is an example.  In \S\ref{GRHsec}, we briefly discuss
the immediate consequences of GRH for our approach, in terms of both upper bounds
and lower bounds.

\subsection{Formal definitions}\label{formaldefinitions}

We now describe the set of $L$-functions we consider.  They will essentially be
$L$-functions corresponding to automorphic representations of $GL(n)$.  However, it
will be convenient for us to state results which apply to those $L$-functions which
are not known to be automorphic.  We also wish to clarify the conditions needed for
our results.  For these reasons and to fix notation, we now introduce those
conditions which we want our $L$-functions to satisfy.  

\begin{itemize}
\item {\bf Euler Product and Dirichlet Series}. Let $\A = \{\A_p\}$ be a sequence of
square complex matrices of dimension $d$ indexed by primes, with eigenvalues
$\alpha_j(p)$.  Then our general L-function $L(s, \A)$ will be given by 
$$L(s, \A) = \prod_p \prod_{j=1}^d (1-\alpha_j(p)/p^s)^{-1} = \sum_{n\geq
1}\frac{a_n}{n^s},
$$where we assume that the product and the series are absolutely convergent for
$\Re(s) > 1$.  Note that the bound $|\alpha_j(p)| \leq p$ is implied by the
convergence of the Euler product for $\Re s > 1$.

\item  {\bf Analytic Continuation} There is some $m = m(\A)$ such that $L(s, \A)$
can be continued analytically over all of $\mathbb{C}$\: except possibly for a pole
of order $m$ at $s=1$.

\item {\bf Growth} There exists some $\delta>0$ such that $L(s, \A) \ll \exp \exp
(\epsilon |t|)$, for all $\epsilon > 0$, as $|t| \rightarrow \infty$, uniformly in
$-\delta < \sigma < 1+ \delta$.  \footnote{This condition is rather weak, as all
$L$-functions are expected to be of finite order.}
 
\item {\bf Functional Equation}  There exists another sequence of complex matrics
$\A^*$ satifying the above 3 axioms, and there exists $Q_\A$, $\beta_i \in
\mathbb{C}\;$ with $\Re \beta_i > -1/2 \; \forall 1\leq i\leq d$, and $\omega \in
\mathbb{C}$ with $|\omega| = 1$ such that 
$$\Lambda(s, \A) := Q_\A^{s/2} \prod_{i=1}^d \Gamma(s/2 + \beta_i)L(s, \A),
$$and
$$\Lambda(s, \A^*) := Q_\A^{s/2} \prod_{i=1}^d \Gamma(s/2 + \bar{\beta_i})L(s, \A^*),
$$satisfy the functional equation
$$\Lambda(1-s, \A) = \omega \Lambda(s, \A^*).
$$
\end{itemize}

For the purposes of this paper, we define the conductor of $L$ to be 
$$\R = (1+|Q_{\A}|)\prod_{j=1}^d (3+|\beta_j|).
$$

Where it is clear from the context, we will use $Q$, $L(s)$, and $\Lambda(s)$ to
denote $Q_\A$, $L(s, \A)$, and $\Lambda(s, \A)$ respectively.  The above conditions
are a subset of the conditions of Molteni's in \cite{Mol}, and it would be
sufficient also to use conditions such as in 5.1 of \cite{IK}.  The structure of
these conditions are similar to the conditions on the Selberg class \cite{Selberg2}.
 For some more discussion about the types of conditions $L$-functions are expected
to satisfy, refer to either Chapter 5 of \cite{IK}, or \cite{Selberg2}.

The conductor we have defined reflects roughly the complexity of $L(s, \A)$.  For
Dirichlet $L$-functions $L(s, \chi)$, $\R \asymp q$ where $q$ is the modulus of
$\chi$.  For $L$-functions associated to Maass forms in 1.1, $\R \asymp \lambda M$
where $M$ is the level, and $\lambda$ is the eigenvalue.  For Hecke $L$-functions
associated to holomorphic cusp forms, $\R \asymp kM$ where $k$ is the weight and $M$
is the level.  For L-functions associated to cuspical automorphic representations of
$GL(n)$, $\R$ is similar to the analytic conductor defined in Equation (5.7) of \cite{IK}.

The results in this paper are uniform in the parameters $Q$, and $\beta_i$ for
all $i$.  The constants $C$ and the implied constants in Lemma 1-4 and Theorem
\ref{main} are allowed to depend only on $m$.  If we assume that $m\leq d$, then
these constants are absolute.  That $m\leq d$ is known for every $L$-function of
interest.  The constants C and the implied constant in all the
later results depend only on $d$.  There, the L-functions are either automorphic or are
Rankin-Selberg L-functions.  

As we mentioned in \S 1.1,  Molteni \cite{Mol} proved bounds of the form $L(1)
\ll_\epsilon \R^\epsilon$.  Theorem 1 in \cite{Mol} proved this bound for
$L$-functions satisfying our conditions above, having a Rankin-Selberg convolution,
and satisfying a technical condition on the size of the coefficients of $L(s)$ which
Molteni called Hypothsis (R).  Before stating Hypothesis (R), Molteni noted that
some hypothesis about the size of the coefficients must be made.  We do not make any
assumptions concerning the size of the coefficients.  Note however that we do have
the trivial bound $|\alpha_j(p)| \leq p$ for all $j$ which arises merely from the
absolute convergence of the Euler product for $\Re s > 1$.  

In order to prove a bound of the form $\ll \R^\epsilon$ for Rankin-Selberg
$L$-functions (Theorem 2 and Theorem 3 in \cite{Mol}),\footnote{Note that given some
Rankin-Selberg L-function $L(s, \pi_1 \times \pi_2)$ for $\pi_1$ and $\pi_2$
automorphic, it is not known that a Rankin-Selberg $L$-function of the form $L(s,
\pi_1 \times \pi_2 \times \tilde{\pi_1} \times \tilde{\pi_2})$ exists.  Thus the
proof of Theorem 1 in \cite{Mol} does not apply.}  Molteni needed additional
hypotheses on the size of the coefficients.  These additional conditions are known
for Rankin-Selberg $L$-function of automorphic representations for lower rank
groups, but are unproven in general.  See \cite{Mol} for a more detailed
description.

Our first theorem is an improvement of the $\R^\epsilon$ bound in Theorems 1, 2 and
3 in Molteni \cite{Mol}, without assuming the existence of a Rankin-Selberg
convolution or any extra condtions on the coefficients.

\begin{Theorem} \label{main}
Let $L(s)$ be an L-function satisfying our hypothesis above.  Then for any $3 \geq
\sigma \geq 1$, we have that $$(\sigma - 1)^m L(\sigma) \ll \exp \left(Cd \frac{\log
\R}{\log \log \R} \right),$$
where $C>0$ is a constant which can only depend on $m$, and is absolute if $m\leq
d$.\footnote{This is to be interpreted in the obvious manner when $\sigma = 1$.  The
dependence of the right hand side on $d$ is natural, and would be present even when
assuming Ramanujan and GRH.}
\end{Theorem}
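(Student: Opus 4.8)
The plan is to bound $\log L(\sigma)$ by combining an explicit-formula/truncated-Dirichlet-series representation of $\log L$ with the functional equation, following the strategy of Soundararajan's work on moments. First I would fix $\sigma \in [1,3]$ and write, for a parameter $x \geq 2$ to be chosen, a smoothed version of
\[
\log L(\sigma) = \sum_{n} \frac{\Lambda_L(n)}{n^\sigma \log n}\, w(n/x) + (\text{zero and pole contributions}),
\]
where $\Lambda_L$ is the von Mangoldt coefficient of $-L'/L$ and $w$ is a suitable weight, obtained by integrating $-\frac{L'}{L}(s)$ against a kernel with a pole at $s = \sigma$. The pole of order $m$ at $s=1$ contributes a term of size $O_m\big(\log\frac{x}{\sigma-1}\big)$ (this is where the $(\sigma-1)^m$ normalization enters and where the footnote's ``obvious manner'' at $\sigma = 1$ is handled: the pole term becomes $m\log x + O(1)$ after dividing out $(\sigma-1)^m$). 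The zeros $\rho$ of $L$ contribute $\sum_\rho \widehat{w}(\cdots)$; using the growth axiom together with the functional equation and a standard Hadamard-factorization / Jensen-type count, the number of zeros in a bounded region is $O(d \log \R)$, so the zero contribution is $O(d \log \R \cdot (\text{kernel decay}))$, which we can make $O(d \log \R / \log x)$ by taking the kernel to decay suitably and $x$ a power of $\R$.

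The crux is then bounding the main Dirichlet-series term $\sum_{n \leq x^{O(1)}} \frac{\Lambda_L(n)}{n^\sigma \log n} w(n/x)$ \emph{without} any Ramanujan-type bound on the $\alpha_j(p)$. The only input is the trivial $|\alpha_j(p)| \leq p$, but crucially this is a bound at \emph{each} prime, and the absolute convergence of the Euler product for $\Re s > 1+\delta$ forces the $\alpha_j(p)$ to be small on average over $p$. Concretely $\Lambda_L(p^k)/k = \sum_j \alpha_j(p)^k$, and one shows $\sum_{p \leq y} \frac{|\sum_j \alpha_j(p)|}{p} \ll d \log\log y$ by a Cauchy–Schwarz / second-moment argument: the contribution of primes where $\max_j|\alpha_j(p)|$ is large, say $> p^\theta$, is controlled because such primes are rare (they would otherwise violate convergence or blow up $L$ on a line $\Re s = 1 + \theta/2$), while primes with $\max_j |\alpha_j(p)| \leq p^\theta$ contribute $\leq d \sum_{p} p^{-(\sigma - \theta)}$ which is small for $\sigma$ close to $1$ once $\theta$ is chosen as a small multiple of $1/\log x$. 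Summing the resulting bound over prime powers and dyadic ranges up to $x$ gives that the main term is $O(d \log\log x + d\,\frac{\log x}{\text{something}})$; optimizing the split between the zero term $\asymp d\log\R/\log x$ and the Dirichlet term by choosing $\log x \asymp \log\R/\log\log\R$ balances everything to $O\!\big(d\,\frac{\log \R}{\log\log\R}\big)$.

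The main obstacle I anticipate is making the ``$\alpha_j(p)$ small on average'' step genuinely quantitative and uniform, since it must be extracted purely from absolute convergence of the Euler product plus the functional equation — there is no Rankin–Selberg $L(s,\pi\times\tilde\pi)$ available to give a clean second moment $\sum_p |a_p|^2/p^{2\sigma}$. I would handle this by working with $\log|L(\sigma_0)|$ for a fixed $\sigma_0 = 1 + c/\log\R$ slightly to the right of $1$: positivity of $\sum_j \mathrm{Re}\,(\alpha_j(p)^k/p^{k s})$-type expressions is not available, but one can still bound $\sum_{p^k \leq x} \Lambda_L(p^k)/(p^{k\sigma_0}\log p^k)$ in absolute value by $\log|L(\sigma_0)| + \log|L|$ evaluated near the boundary, and then bound $|L|$ there crudely via the convexity bound coming from the Phragmén–Lindelöf principle applied to $\Lambda(s,\A)$ (using the growth axiom and functional equation). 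This introduces a bootstrap: a crude bound of the shape $L \ll \R^{O(1)}$ near $\Re s = 1$ is enough to feed into the argument and yields the much stronger subpolynomial bound claimed. Verifying that all implied constants depend only on $m$ (and are absolute when $m \leq d$, since then the pole term $O_m(1) = O(1)$) is routine bookkeeping once the structure is in place.
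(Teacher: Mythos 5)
Your proposal correctly identifies the Soundararajan-style framework (explicit formula for $-L'/L$, integrate to get $\log L$, control the zero and pole contributions), but the core step --- bounding the prime sum --- contains a genuine gap. You propose taking $\log x \asymp \log\R/\log\log\R$ and then arguing that the $\alpha_j(p)$ are ``small on average'' via a Cauchy--Schwarz / second-moment argument, with the justification that primes where $\max_j|\alpha_j(p)|$ is large ``are rare (they would otherwise violate convergence or blow up $L$).'' This cannot be extracted from the axioms. Absolute convergence of the Euler product for $\Re s > 1$ is a statement about a \emph{fixed} $L$-function; it implies $|\alpha_j(p)|\le p$, but gives no uniform-in-$\R$ second-moment bound of the type $\sum_{p\le y}|\sum_j\alpha_j(p)|^2/p \ll d\log\log y$ --- that is essentially the Rankin--Selberg input that Theorem~\ref{main} deliberately dispenses with. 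Nothing in the hypotheses rules out that for a given $L$ with conductor $\R$, every prime up to some $y = y(\R)$ has $|\alpha_j(p)|$ close to $p$. Your bootstrap suggestion (``bound $\sum_{p^k\le x}\Lambda_L(p^k)/(p^{k\sigma_0}\log p^k)$ in absolute value by $\log|L(\sigma_0)|+\log|L|$'') also does not work: the explicit formula expresses $\log L$ in terms of that signed prime sum, and a bound on $|\log L|$ does not convert to a bound on the sum of absolute values of the terms.

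The paper's actual mechanism is simpler and quite different: it chooses $x = \log\R$ --- dramatically smaller than what you have in mind --- and then uses nothing but the trivial pointwise bound $|\alpha_j(p)|\le p$, which gives $|\Lambda_\A(p^k)|/(p^{k\sigma_0}\log p^k) \le d/k \le d$ for every prime power $p^k\le x$. Since the sum then has at most $O(x/\log x)$ terms, the prime-sum contribution is $\ll dx/\log x = d\log\R/\log\log\R$, and the error terms $\log\R/\log x = \log\R/\log\log\R$ line up with this automatically; no balancing and no averaging is needed. Also, the zero contribution is not merely bounded as you suggest: after integrating from $\sigma_0$ to $\infty$ it is $\frac{e^{-\lambda}}{\lambda}F(\sigma_0)/\log x$, which is dominated by the $-F(\sigma_0)/\log x$ term coming from the explicit expression for $\Re\frac{L'}{L}(\sigma_0)$, so the zeros give a net \emph{nonpositive} contribution (Soundararajan's positivity trick). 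The upshot: the key idea you are missing is that with only the trivial bound available, the right move is to shrink $x$ down to $\log\R$ rather than try to extract cancellation or averaging over a longer range.
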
 
Of course, if $\sigma \geq 3$, then the size of $L(\sigma)$ is well understood.  In
particular, using the trivial bound of $|\alpha_j(p)| \leq p$ and the Euler product,
it is immediate that $L(\sigma) \leq \zeta(2)^d$.

\section{Proof of Theorem \ref{main}}\label{proofofmain}

\subsection{An bound for $\log L(s)$}
We first record some basic facts which follow from our conditions on $L$-functions. 
The function $s^m(s-1)^m\Lambda$ with $\Lambda$ as in the last section is an entire
function of order 1.  Indeed, by standard arguments involving the Phragmen -
Lindelof principle, we know that $L(s)$ is bounded by polynomial growth in $|t|$ for
$\Re{s}$ bounded.  Stirling's formula for $\Gamma$ then gives the result.  Hence we
must have the Hadamard product
$$s^m(s-1)^m\Lambda(s) = e^{A+Bs}\prod_\rho (1-\frac{s}{\rho})e^{s/\rho},
$$where as usual the product is over the non-trivial zeros of $L$ inside the
critical strip $0\leq \Re(s) \leq 1$.  See \cite{Da} for a description of this in
the classical case, which extends to our case without any change, or see Chapter 5
of \cite{IK} for a treatment of this in a general setting.  Logarithmically
differentiating this product for $\Re s > 1$ gives
$$\sum_{\rho} \frac{1}{s-\rho} + \frac{1}{\rho} =  \frac{\log Q}{2} +
\frac{1}{2}\sum_{i=1}^N \frac{\Gamma '}{\Gamma} ( s/2 + \beta_i) + \frac{L'}{L}(s) +
\frac{m}{s-1} + O(1).
$$From this we derive that for $\Re s > 1$,
\begin{equation}\label{logderivative}
- \Re \frac{L'}{L}(s) = -F(s) + \frac{\log |Q|}{2}  + G(s) + \frac{m}{s-1} + O(1),
\end{equation}where
$$F(s) = \sum_\rho \frac{\sigma  - \beta}{(\sigma - \beta)^2 + (t-\gamma)^2}
$$and
$$
G(s) = \sum_{i=1}^N \frac{1}{2} \frac{\Gamma '}{\Gamma} (s/2 + \beta_i).
$$
As usual, we have written $s = \sigma + it$ and non-trivial zeros of $L(s)$ are
written $\rho = \beta + i\gamma$.  We are now ready to proceed to the proof of
Theorem \ref{main}.

We start with a lemma which provides us with a expression for $\frac{L'}{L}(s)$
inside the critical strip in terms of a sum over primes and a sum over zeros.  This
is a version of Lemma 1 of Soundararajan in \cite{Sound} for our generalized
$L$-functions, and is also very similiar to Lemma 2 in \cite{Selberg2}.  Although
the proof of our Lemma is exactly the same as in \cite{Sound}, we provide a sketch
for the sake of completeness.

\begin{lemma}
For $s$ not a zero or pole of $L(s)$, and any $x>1$,
\begin{eqnarray*}
-\frac{L'}{L}(s) &=& \sum_{n\leq x} \frac{\Lambda_{\A}(n)}{n^s}\frac{\log
(x/n)}{\log x} + \frac{1}{\log x}\left(\frac{L'}{L}(s)\right)' \\
&+& \frac{1}{\log x} \sum_{\rho}\frac{x^{\rho - s}}{(\rho - s)^2} -
\frac{mx^{1-s}}{(1-s)^2\log x} + \frac{1}{\log x}\sum_{i=1}^d {\sum_{k\geq 0}}^\flat
\frac{x^{-s - 2(\beta_i + k)}}{(s+2(\beta_i + k))^2},
\end{eqnarray*}
where the $\flat$ over the sum indicates that we sum over $i$ and $k$ such that
$-2(\beta_i + k)$ is a trivial zero of $L(s)$, and $\Lambda_\A(n)$ is the
coefficient which appears in the Dirichlet series for $-\frac{L'}{L}(s)$ to the
right of $\Re s >1$, namely 
$$\Lambda_\A(n) = \begin{cases} 
\log p \sum_{i=1}^d \alpha_i(p)^k &\textup { for } n = p^k\\
0 &\textup{ otherwise}.
\end{cases}  
$$
\end{lemma}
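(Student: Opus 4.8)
The plan is to apply the standard Perron-type / Mellin-inversion argument to the logarithmic derivative, exactly as in Soundararajan's Lemma 1. First I would start from the integral representation
$$
\frac{1}{2\pi i}\int_{(c)} \left(-\frac{L'}{L}\right)(s+w)\,\frac{x^w}{w^2}\,dw,
$$
taken along a vertical line $\Re w = c$ with $c$ large enough (say $c > 1 - \Re s$) that the Dirichlet series $-\frac{L'}{L}(s+w)=\sum_n \Lambda_\A(n) n^{-s-w}$ converges absolutely. Interchanging sum and integral and using the classical fact that $\frac{1}{2\pi i}\int_{(c)} \frac{y^w}{w^2}\,dw$ equals $\log y$ for $y\ge 1$ and $0$ for $0<y<1$, this integral evaluates to $\sum_{n\le x}\Lambda_\A(n) n^{-s}\log(x/n)$. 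Dividing by $\log x$ produces the first term on the right-hand side of the Lemma.

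Next I would shift the contour from $\Re w = c$ to $\Re w = -\infty$ (or far enough to the left), picking up residues from all the poles of the integrand $-\frac{L'}{L}(s+w)\,x^w/w^2$. The double pole at $w=0$ contributes $-\frac{L'}{L}(s)\log x$ together with the term $\bigl(\frac{L'}{L}(s)\bigr)'$ — dividing through by $\log x$ this gives the $-\frac{L'}{L}(s)$ on the left and the $\frac{1}{\log x}\bigl(\frac{L'}{L}(s)\bigr)'$ term on the right. The simple pole of $L(s+w)$ at $s+w=1$, i.e. $w=1-s$, has order $m$ in $L$ so $-\frac{L'}{L}$ has a simple pole there with residue $-m$; this produces $-\frac{m x^{1-s}}{(1-s)^2\log x}$. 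Each non-trivial zero $\rho$ of $L$ gives a simple pole of $-\frac{L'}{L}(s+w)$ at $w=\rho-s$ with residue $1$, yielding $\frac{1}{\log x}\sum_\rho \frac{x^{\rho-s}}{(\rho-s)^2}$; and each trivial zero at $s+w=-2(\beta_i+k)$ similarly gives the last sum, with the flat indicating we only count those $(i,k)$ for which $-2(\beta_i+k)$ is actually a zero coming from the gamma factors.

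The main technical obstacle is justifying the contour shift: one needs to bound $\frac{L'}{L}(s+w)$ on a sequence of horizontal segments and on the far-left vertical line so that the integral over the connecting pieces tends to zero and the residue sum converges. This is handled exactly as in the classical case (see \cite{Da}) and in \cite{Sound}: one uses the Hadamard product and the partial-fraction expansion for $\frac{L'}{L}$ recorded in Equation \eqref{logderivative}, together with Stirling's formula for the gamma factors, to control $\frac{L'}{L}$ away from zeros; the segments are chosen to avoid the zeros, and the zero-counting estimate (spacing of the $\gamma$'s, which follows from the order-$1$ Hadamard factorization) guarantees both that suitable segments exist and that $\sum_\rho |\rho-s|^{-2}$ converges, making the residue sum absolutely convergent. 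Since all of this is identical to \cite{Sound} with only the gamma factors and the pole of order $m$ modified in the obvious way, I would simply indicate these points and refer to \cite{Sound} and \cite{Da} for the remaining details.
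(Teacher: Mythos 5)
Your proposal is correct and follows essentially the same route as the paper: Perron's formula with the $x^w/w^2$ kernel on $\Re w = c$, a contour shift to the far left, and collection of residues at $w=0$ (double pole), $w = 1-s$ (from the pole of $L$), $w = \rho - s$ (nontrivial zeros), and $w = -s - 2(\beta_i + k)$ (trivial zeros from the gamma factors); the paper likewise defers the justification of the shift to Davenport and Selberg. One small bookkeeping slip: since $L$ has a pole of order $m$ at $1$ and a simple zero at each $\rho$, the residue of $-L'/L$ is $+m$ at $1$ and $-1$ at $\rho$ (you wrote $-m$ and $+1$), but the final contributions you write down — $-\frac{m x^{1-s}}{(1-s)^2\log x}$ and $+\frac{1}{\log x}\sum_\rho \frac{x^{\rho-s}}{(\rho-s)^2}$ after rearranging to isolate $-L'/L(s)$ — are nevertheless the correct ones.
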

\begin{proof}
Let $c = 1+1/\log x$.  Then, by Perron's formula,
$$\frac{1}{2\pi i} \int_{c-i\infty}^{c+i \infty} -\frac{L'}{L}(s+w)
\frac{x^w}{w^2}dw = \sum_{n\leq x}\frac{\Lambda_{\A}(n)}{n^s}\frac{\log (x/n)}{\log
x}.
$$Now if we move the contour to the left, we pick up various residues at the poles
of the integrand.  There is a residue from the pole at 0, and then there are
residues at the poles corresponding to the possible pole of $L$ at 1, the nontrivial
zeros of $L$, and the trivial zeros of $L$.  Together, these contribute 
$$-\log x \frac{L'}{L}(s) - \left(\frac{L'}{L}(s)\right)' +
\frac{mx^{1-s}}{(1-s)^2}-\sum_\rho\frac{x^{\rho - s}}{(\rho - s)^2} - \sum_{i=1}^d
{\sum_{k\geq 0}}^\flat \frac{x^{-s - 2(\beta_i + k)}}{(s+2(\beta_i + k))^2}.$$  The
justification of this follows from a slight modification of standard arguments from
the original proof of the prime number theorem (see \cite{Da}).  See the proof of
Lemma 2 in \cite{Selberg1} for some more details.  
\end{proof}

Since we are concerned with bounding $L(s)$ near $s = 1$, we integrate the above
Lemma to get an expression for $\log L(s)$ near $s = 1$.  Using Soundararajan's
technique in \cite{Sound}, we will then neglect the sum over non-trivial zeros by
arguing that they give a negative contribution.  This is the subject of our next
Lemma.

\begin{lemma2}
Let $x > 2$, $\lambda > 0$ be such that $e^{-\lambda} \leq \lambda$, and $\sigma_0 =
1 + \lambda/\log x$.  Then
$$\log |L(\sigma_0)| \leq \Re \sum_{n\leq x} \frac{\Lambda_\A(n)\log
(x/n)}{n^{\sigma_0} \log n \log x} + \frac{1}{\log x}\left(\frac{\log |Q|}{2} +
G(\sigma_0) \right) + O\left( d \right),
$$uniformly in $\lambda$.  It is also uniform in $m$ if $m\leq d$.
\end{lemma2}

\begin{proof} 
We integrate both sides of the equation of Lemma 1 from $s = \sigma_0$ to $\infty$. 
To investigate the contribution of the sum over trivial zeros of $L$, recall that
$\Re \beta_i > -1/2$.  Now, we need to distinguish two cases.    

In the first case, we assume that $-2\Re (\beta_i+k) < 1/2$.  Then
$$\int_{\sigma_0}^\infty \frac{x^{-s - 2(\beta_i + k)}}{(s + 2(\beta_i + k))^2}ds
\ll \frac{1}{(1+k^2) \sqrt{x}\log x}.
$$ 
In the second case, assume that $-2\Re (\beta_i+k) \geq 1/2$. Then 
$$\int_{\sigma_0}^\infty \frac{x^{-s - 2(\beta_i + k)}}{(s + 2(\beta_i + k))^2}ds
\ll \frac{1}{(\sigma_0 - 1)^2\log x} \left(x^{-\sigma_0 - 2(\beta_i + k)}\right)\ll
\log x.
$$ We note for future reference that the second case can only occur a finite number
of times, specifically at most $d$ times.  This is because $k \geq 0$ and $\Re
\beta_i > -1/2$, so the second case can only occur if $k=0$.

Next, 
\begin{eqnarray*}
-\int_{\sigma_0}^{\infty} \frac{mx^{1-s}}{\log x(1-s)^2}ds
&\leq& 0.
\end{eqnarray*}

Further, 
\begin{eqnarray*}
\sum_\rho \left|\int_{\sigma_0}^{\infty} \frac{x^{\rho-s}}{(\rho-s)^2}\right| &\leq&
\sum_\rho\frac{x^{\beta - \sigma_0}}{|\sigma_0 - \rho|^2\log x}\\ 
&\leq& \sum_\rho\frac{x^{1-\sigma_0}}{\log x} \frac{1}{(\sigma_0 - \beta)^2 +
\gamma^2}\\
&\leq& \frac{e^{-\lambda}}{\log x (\sigma_0 - 1)}F(\sigma_0)\\
&=& \frac{e^{-\lambda}}{\lambda}F(\sigma_0).
\end{eqnarray*}
Hence,
\begin{eqnarray*}
\log |L(\sigma_0)| &\leq& \Re \sum_{n\leq x} \frac{\Lambda_\A(n)\log
(x/n)}{n^{\sigma_0} \log n \log x} - \Re \frac{1}{\log x} \frac{L'}{L}(\sigma_0)+ 
\\
&&\frac{e^{-\lambda}}{\lambda}\frac{F(\sigma_0)}{\log x } + O\left( d \right).
\end{eqnarray*}
Now we substitute (\ref{logderivative}) for $\frac{L'}{L}(\sigma_0)$ to get
\begin{eqnarray*}
\log |L(\sigma_0)| &\leq& \Re \sum_{n\leq x} \frac{\Lambda_\A(n)\log
(x/n)}{n^{\sigma_0} \log n \log x} + \frac{1}{\log x} \left(-F(\sigma_0) +
\frac{\log |Q|}{2}  + G(\sigma_0) + \frac{m}{\sigma_0-1}\right) + 
\\
&&\frac{e^{-\lambda}}{\lambda}\frac{F(\sigma_0)}{\log x } + O\left( d \right)\\
&\leq& \Re \sum_{n\leq x} \frac{\Lambda_\A(n)\log (x/n)}{n^{\sigma_0} \log n \log x}
+ \frac{1}{\log x} \left(\frac{\log |Q|}{2}  + G(\sigma_0)\right) + O\left( d
\right),
\end{eqnarray*}
where we have used that $\frac{e^{-\lambda}}{\lambda} \leq 1$ and that $F(\sigma_0)
\geq 0$ to see that the $F(\sigma_0)$ terms give a negative contribution.

\end{proof}
It will be convenient to express the bound in Lemma 2 in terms of $\R$, which is the
subject of our next Lemma.
\begin{lemma3}
With the same conditions as in Lemma 2, 
$$\log |L(\sigma_0)| \leq \Re \sum_{n\leq x} \frac{\Lambda_\A(n)\log
(x/n)}{n^{\sigma_0} \log n \log x} + O\left(\frac{\log \R}{\log x} + d\right),
$$where the implied constant is absolute if $m\leq d$.
\end{lemma3}

\begin{proof}
We have that
\begin{eqnarray*}
G(\sigma_0) 
&=& \sum_{i=1}^d \frac{1}{2} \frac{\Gamma '}{\Gamma} (\sigma_0/2 + \beta_i)\\
&\ll& \sum_{i=1}^d \frac{1}{2} \log (\sigma_0/2 + \beta_i) + \sum_{i=1}^d\frac{1}{
\sigma_0 + 2\beta_i}\\
&\ll& \log \prod_{i=1}^d  (\sigma_0/2 + \beta_i) +d \log x\\
&\ll& \log \prod_{i=1}^d  (1 + |\beta_i|) + d\log x,
\end{eqnarray*}uniformly in $\beta_i$.  Here, we have used that $\sigma_0 + 2\beta_i
> \frac{\lambda}{\log x}$.  

From the above we garner
$$\frac{\log |Q| + G(\sigma_0)}{\log x} \ll \frac{\log \R}{\log x} + d,
$$from which the result follows.
\end{proof}

\subsection{Proof of the theorem.}
Now that we have an upper bound for $\log|L(s)|$ near $s=1$, we can use this to
derive an upper bound for $(s-1)^mL(s)$ near $s=1$.  To gain some uniformity in our
results, we first show the following Lemma.  \footnote{Note that the quantity we are
bounding in Lemma 4 is the ratio of the Archimedean factor evaluated at $\sigma_0$
to the Archimedean factor evaluated at $\sigma$.} 
\begin{lemma4}
With $\sigma_0$ as in Lemma 2 and $1 \leq \sigma \leq \sigma_0$, there exists an
absolute constant $C>0$ such that
$$Q^{\frac{\sigma_0 - \sigma}{2}}\prod_{i=1}^d \frac{\Gamma(\sigma_0/2 +
\beta_i)}{\Gamma(\sigma/2 + \beta_i)} \ll \exp\left(C \frac{\log \R}{\log x} +d
\right).
$$
\end{lemma4}
\begin{proof}
Taking the logarithm of both sides, we see that it suffices to show
\begin{equation}\label{Gammaratioeqn1}
\frac{\sigma_0 - \sigma}{2}\log (|Q|+1) + \sum_{i=1}^d \log
\left|\frac{\Gamma(\sigma_0/2 + \beta_i)}{\Gamma(\sigma/2 + \beta_i)}\right| \ll
\frac{\log \R}{\log x} + d.
\end{equation}
In the above, we have inserted $|Q|+1$ in place of $|Q|$ as a matter of convenience.
 Further, it suffices to show that
\begin{equation} \label{Gammaratioeqn2}
\log \frac{\Gamma(\sigma_0/2 + \beta_i)}{\Gamma(\sigma/2 + \beta_i)}
\ll \frac{\log (1+|\beta_i|)}{\log x} + 1
\end{equation}holds for all $i$.
Indeed, summing (\ref{Gammaratioeqn2}) over $i$ and using that $\sigma_0 - \sigma
\ll 1/\log x$ gives
\begin{eqnarray*}
&&\frac{\sigma_0 - \sigma}{2}\log (|Q|+1) + \sum_{i=1}^d \log
\left|\frac{\Gamma(\sigma_0/2 + \beta_i)}{\Gamma(\sigma/2 + \beta_i)} \right|\\
&\ll& \frac{\log (|Q|+1)}{\log x} + \sum_{i=1}^d\frac{\log (1+|\beta_i|)}{\log x} + d\\
&=& \frac{1}{\log x} \log \left((|Q|+1) \prod_{i=1}^d (1+|\beta_i|)\right) + d\\
&\leq&\frac{\log \R}{\log x} + d.
\end{eqnarray*}
Now we prove (\ref{Gammaratioeqn2}).  Let $s = \sigma/2 +\beta_i$, and note that
$\Re s > 0$.  Let $\delta  = \frac{\sigma_0 - \sigma}{2}$.  The ratio
$\frac{\Gamma(s + \delta)}{\Gamma(s)}$ is bounded by a constant for $\Re s \geq 0$
and $|s| \leq 10$.  Note that at $s = 0$, the ratio is actually $0$.  Henceforth
assume $|s| \geq 10$.  Then apply Stirling's formula to see that
\begin{eqnarray*}
\log \frac{\Gamma(s+\delta)}{\Gamma(s)}
&=& (s+\delta - 1/2)\log (s+\delta) - (s-1/2)\log s + O(1)\\
&=& \delta \log (s+\delta)+ O(1).
\end{eqnarray*}Plugging back $s = \sigma/2 + \beta_i$ and noting $\delta \ll 1/\log
x$ gives the result.
\end{proof}

Now we are ready to prove Theorem \ref{main}.
\begin{proof}
First, we will show that $L(\sigma_0) \ll \exp\left(Cd \frac{\log \R}{\log \log \R}
\right)$ for $\sigma_0 = 1+\frac{\lambda}{\log x}$ where $\lambda$ is the unique
real solution to $e^{-\lambda} = \lambda$.  We first note that
$$|\Lambda_\A(p^k)| \leq \log p \sum_{i=1}^d |\alpha_i(p)|^k \leq dp^k\log p.
$$

By Lemma 3, we have
\begin{eqnarray*}
\log |L(\sigma_0)| 
&\leq& \Re \sum_{n\leq x} \frac{\Lambda_\A(n)\log (x/n)}{n^{\sigma_0} \log n \log x}
+ O\left(\frac{\log \R}{\log x} + d\right)\\
&\leq& \sum_{p^k\leq x} d + O\left(\frac{\log \R}{\log x}+d\right)\\
&\ll& \frac{dx}{\log x} + \frac{\log \R}{\log x} + d\\
&\ll& \frac{d\log \R}{\log \log \R},
\end{eqnarray*}upon setting $x = \log \R$.  
Now fix any $\sigma \geq 1$.  If $\sigma \geq \sigma_0$, we are done since the above
proof works as well.\footnote{Lemma 2 was proven uniformly in $\lambda$.}    Say
that $1\leq \sigma < \sigma_0$.  We first note that 
\begin{equation} \label{simpleincreasing}
|\sigma^m (1-\sigma)^m \Lambda(\sigma)| \leq |\sigma_0^m (1-\sigma_0)^m
\Lambda(\sigma_0)|.
\end{equation}
Indeed, set $\Xi(s) = s^m (1-s)^m \Lambda(s)$ and note that $\frac{\Xi '}{\Xi}(s) =
B + \sum_\rho \left(\frac{1}{s-\rho} + \frac{1}{\rho}\right)$ where $\Re B =
-\sum_\rho \Re \frac{1}{\rho}$.  See (7) and (18) in Chapter 5 of \cite{Da} for a
proof of this.  Thus $\Re \frac{\Xi '}{\Xi}(\sigma) = \sum_\rho
\Re\frac{1}{\sigma-\rho} = \sum_\rho \frac{\sigma - \beta}{(\sigma - \beta)^2 +
\gamma^2} \geq 0$ for any $\sigma \geq 1$.  Hence $\log |\Xi(\sigma)|$ and
$|\Xi(\sigma)|$ are increasing.

Finally, we see that (\ref{simpleincreasing}) gives us 
$$(1-\sigma)^m L (\sigma)\ll \left|Q^{\frac{\sigma_0 - \sigma}{2}}\prod_{i=1}^d
\frac{\Gamma(\sigma_0/2 + \beta_i)}{\Gamma(\sigma/2 + \beta_i)}L(\sigma_0)\right|,
$$and the result follows by Lemma 4.
\end{proof}

\section{Rankin-Selberg and refined upper bounds} \label{betteruppersec}
For the rest of this paper, we will restrict our attention to $L$-functions $L(s,
\A) = L(s, \pi)$ for $\pi$ an automorphic representation of $GL(m)$.  This allows us
to apply the well developed theory of the Rankin-Selberg convolution attached to
such representations.  See the appendix of \cite{RudSar} for a brief survey of the
properties of the Rankin-Selberg convolution, and see \cite{JPS} for a far more
detailed exposition.  Here, we remind the reader of its most basic properties.
$\\$
\paragraph{{\bf Rankin-Selberg:}}Say that we have the automorphic $L$-functions 
$$L(s, \A) =\prod_p \prod_{j=1}^d (1-\alpha_j(p)/p^s)^{-1} $$ and 
$$L(s, \M)=\prod_p \prod_{j=1}^d (1-\mu_j(p)/p^s)^{-1}.$$  Then there exists a
finite set of exceptional primes $\mathcal{P}$ such that outside of $\mathcal{P}$,
the local factors of the Rankin-Selberg L-function $L(s, \A\times \M)$ is of the
form
$$L_p(s, \A\times \M) = \prod_{i, j} (1-\alpha_i(p) \mu_j(p)p^{-s})^{-1}.
$$We have moreover that $L(s, \A\times \M)$ satisfies all the conditions introduced
in \S \ref{formaldefinitions}.  We call the primes $p \in \mathcal{P}$ ramified and
note that the coefficients corresponding to a ramified prime $p$ still satifies the
trivial bound of $\leq p$.  

We denote the conductor of this L-function by $\R_{\A\times \M}$.  Then we have 
that $|\mathcal{P}| \ll \log\R_{\A\times \M}$ and $\log\R_{\A\times \M}\asymp \log
\R_{\A} + \log \R_\M$.

\begin{thm2}\label{impupper}
Let $L(s)$ be automorphic of degree $d$ with a pole of order $m$ at $s=1$.  Then for
any $3 \geq \sigma \geq 1$, we have that 
$$(\sigma - 1)^m L(\sigma)  \ll\exp \left( C(\log \R)^{1/2} \right), 
$$where $C>0$ is a constant which depends only on $d$.
\end{thm2}

As we had pointed out before, the constant $C$ and the implied constant in this
result and all the following results can depend only on $d$, and in particular, do not depend
on $m$ since $m \leq d$.  

It is possible to prove better results in the case where more is known about $L(s)$.
 For instance, it will be clear from the proof of Theorem \ref{impupper} that one
can improve the bound to $\ll \exp \left( C(\log \R)^{1/2^l} (\log \log
\R)^{\frac{2^{l}-2}{2^l}}\right)$ if $L(s, \A^{(l)}\times \bar{\A}^{(l)})$ exists. 
Here $\A^{(l)}$ denotes the convolution of $\A$ with itself $l$ times.  Note that
such an assumption immediately gives better upper bounds on the coefficients of
$L(s)$, namely of the from $|a(n)| \ll n^{1/2l}$.  However, directly using such
bounds on the coefficients to find bounds on $L(1)$ does not result in a significant
improvment over Theorem \ref{main}.

In this general theme, here is a result where $L(s, \A)$ is a $GL(2)$ L-function
with a symmetric power $L(s, \sym^l \A)$ such that the Rankin-Selberg L-function
$L(s, \sym^l \A \times \overline{\sym^l \A})$ exists.  \footnote{The corollaries
that follow are corollaries of Theorem \ref{main}.  Theorem \ref{impupper} also
follows from Theorem \ref{main} but we have made the somewhat arbitrary decision not
to call Theorem \ref{impupper} a corollary to emphasize the result.}

\begin{corol}\label{sympower}
Let $L(s, \A)$ be automorphic for $GL(2)$ over $\mathbb{Q}$\: such that both the
$l^{th}$ symmetric power $L(s, \sym^l \A)$ and the Rankin-Selberg L-function $L(s,
\sym^l \A \times \overline{\sym^l \A})$ exists.  Then for any $3 \geq \sigma \geq
1$,
$$(\sigma - 1)^m L(\sigma)  \ll\exp \left( C(\log \R)^{1/2l}
(\log\log\R)^{\frac{l-1}{l}} \right), 
$$where the constant $C$ and the implied constant are absolute.

\end{corol}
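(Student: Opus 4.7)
The plan is to revisit the proof of Theorem \ref{main} applied to $L(s, \A)$, replacing the trivial estimate on the prime contribution by one drawn from the Rankin--Selberg $L(s, \sym^l \A \times \overline{\sym^l \A})$. Starting from Lemma 3 at $\sigma_0 = 1 + \lambda/\log x$, the contribution of prime powers $p^k$ with $k \geq 2$ is absorbed into the error $O(\log \R/\log x + 1)$ exactly as before, and the problem reduces to bounding $\sum_{p \leq x} |a_\A(p)|/p^{\sigma_0}$. I would apply H\"older's inequality with exponent $2l$:
$$\sum_{p\leq x}\frac{|a_\A(p)|}{p^{\sigma_0}} \leq \left(\sum_{p\leq x}\frac{1}{p^{\sigma_0}}\right)^{1-\frac{1}{2l}}\left(\sum_{p\leq x}\frac{|a_\A(p)|^{2l}}{p^{\sigma_0}}\right)^{\frac{1}{2l}} \ll (\log\log x)^{1-\frac{1}{2l}}\,S^{\frac{1}{2l}},$$
where $S := \sum_{p\leq x}|a_\A(p)|^{2l}/p^{\sigma_0}$.

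The essential $GL(2)$ input is the pointwise bound $|a_\A(p)|^{2l} \ll_l |a_{\sym^l \A}(p)|^2 + 1$, which follows from the Chebyshev identity $a_{\sym^l \A}(p) = (\alpha^{l+1} - \beta^{l+1})/(\alpha - \beta)$ together with $|\alpha_p\beta_p| = 1$ at unramified primes. Since the Dirichlet coefficients of $L(s, \sym^l\A \times \overline{\sym^l\A})$ are nonnegative with coefficient at $p$ equal to $|a_{\sym^l \A}(p)|^2$, this yields
$$S \ll_l \log L(\sigma_0, \sym^l\A \times \overline{\sym^l\A}) + \log\log x.$$
Applying Theorem \ref{main} to this Rankin--Selberg $L$-function (of degree $(l+1)^2$, pole of bounded order at $s=1$, and conductor polynomial in $\R$) gives
$$\log L(\sigma_0, \sym^l\A \times \overline{\sym^l\A}) \ll_l \log\log x + \log \R/\log\log\R,$$
and hence $S \ll_l \log \R/\log\log\R$. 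Substituting back into Lemma 3,
$$\log |L(\sigma_0)| \ll_l (\log\log x)^{1-\frac{1}{2l}}\left(\frac{\log \R}{\log\log\R}\right)^{\frac{1}{2l}} + \frac{\log \R}{\log x}.$$
Choosing $\log x$ of order $(\log\R)^{1-1/(2l)}$ balances the two terms and yields $\log|L(\sigma_0)| \ll_l (\log\R)^{1/(2l)}(\log\log\R)^{(l-1)/l}$. The extension from $\sigma_0$ to arbitrary $\sigma \in [1, \sigma_0]$ proceeds via Lemma 4 exactly as in the proof of Theorem \ref{main}.

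The main obstacle is establishing the pointwise bound $|a_\A(p)|^{2l} \ll_l |a_{\sym^l \A}(p)|^2 + 1$: one must check that, even when the Satake parameters are maximally unbalanced (as near the Kim--Sarnak bound), the sum $a_{\sym^l \A}(p) = \sum_{j=0}^l \alpha^{l-j}\beta^j$ cannot cancel below a constant multiple of $\max(|\alpha|,|\beta|)^l$. With $|\alpha\beta| = 1$, the dominant term $\alpha^l$ (assuming $|\alpha| \geq |\beta|$) outstrips the remainder as a geometric series of ratio $1/|\alpha|^2$, which secures the inequality uniformly in $l$.
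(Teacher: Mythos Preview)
Your argument is essentially the paper's: Lemma 3 at $\sigma_0$, H\"older with exponent $2l$, the pointwise comparison $|a_\A(p)|^{2l}\ll_l |a_{\sym^l\A}(p)|^2+1$ via the geometric-series domination of $\alpha^l$, and then Theorem \ref{main} applied to $L(s,\sym^l\A\times\overline{\sym^l\A})$, followed by Lemma 4. Two minor points of divergence worth noting. First, your claim that the $k\geq 2$ terms are absorbed ``exactly as before'' is not accurate: Theorem \ref{main} never separates out higher prime powers, and the paper instead proves the pointwise bound $|a(p^k)|^{2l}\ll_l d(p^k)+1$ for \emph{all} $k$ and runs H\"older over the full prime-power sum. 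Your restriction to $k=1$ is harmless (the $k\geq 2$ tail is $O(\log\log x)$ using $|\alpha_j(p)|<p^{1/2}$), but it needs its own justification rather than an appeal to a nonexistent earlier step. Second, the paper simply takes $x=\R$, so that $\log\R/\log x=1$ is already negligible; your optimization $\log x\asymp(\log\R)^{1-1/(2l)}$ is unnecessary but does no harm.
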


\begin{rmk1}
By the recent work by Kim concerning functoriality of the symmetric fourth power
\cite{Kim}, Corollary \ref{sympower} gives us that 
$$(\sigma - 1)^m L(\sigma)  \ll\exp \left( C(\log \R)^{1/8}
(\log\log\R)^{\frac{3}{4}} \right),$$which implies (\ref{betteruppereqn}) in \S 1.2.
 We stated the Corollary in a more general form for the purpose of illustrating that
more information about functoriality of higher symmetric powers leads directly to
improvments on the upper bound.  

Recall that one may prove Ramanujan's conjecture in this situation assuming
functoriality of all symmetric powers.  Given Ramanujan's conjecture, it is fairly
easy to show that $L(1)$ is bounded by a power of $\log \R$.  Corollary
\ref{sympower} also tells us that $L(1)$ is bounded by a power of $\log \R$ if we
can take $l$ large enough, namely around $\log \log \R$.  Thus it is in some sense
an interpolation between the expected bound from Ramanujan and the bound arising
from Theorem \ref{main}.
\end{rmk1}

The original method of Iwaniec which Molteni generalizes in \cite{Mol} depends
crucially on the bound 
$$\sum_{n\leq x} \frac{|a_n|}{n} \ll_\epsilon (x\R)^\epsilon.
$$This type of bound is of some independent interest, and the same idea is used by
Iwaniec in \cite{I2} to prove upper bounds at $1/2$ and by Hoffstein and Lockhart in
\cite{HoffLock} to prove lower bounds at $1$.  Theorem \ref{main} can be used to
derive an improved version of this bound.

\begin{corol2} \label{shortsum}
Let $L(s)$ be automorphic of degree $d$, then
$$\sum_{n\leq x} \frac{|a_n|}{n} \ll \max (\exp (C \sqrt{\log \R}), \exp
(C\sqrt{\log x})),
$$where $C>0$ is a constant depending only on $d$.
\end{corol2}

A better though messier version of the above is possible and will be evident during
the proof of the result.  Once this bound is proven, we may also show similar upper
bounds for $f^{j}(s)$ near $s=1$ where $f(s) = (s-1)^m L(s)$.  Specifically, we have

\begin{corol3} \label{derivative}
Let $L(s)$ be automorphicof degree $d$ and $f(s) = (s-1)^m L(s)$.  Then for $|s-1|
\ll \frac{1}{\log \R}$,
$$f^{j}(s) \ll \exp(C\sqrt{\log \R}),
$$where $C>0$ is a constant.  Here the constants are allowed to depend on $j$ and $d$.
\end{corol3}
The same improvements which appear in Corollary \ref{sympower} hold for Corollaries
\ref{shortsum} and \ref{derivative} should the same assumptions hold, but we will
not show this.

\subsection{Proof of Theorem \ref{impupper}:}
This proof depends on the observation that 
$$\left|\frac{\Lambda_\A(p^k)}{\log p}\right|^2 \leq \frac{\Lambda_{\A \times
\bar{\A}} (p^k)}{\log p},
$$for all primes $p$ and all $k\geq 1$.  See Proposition 6 in \cite{Mol} for a nice
proof of this fact.  Knowing this, we use Cauchy-Schwarz to get our result.
\begin{proof} We will first prove the bound at $\sigma_0$.  We note that 
$$\left|\sum_{\substack{n=p^k\leq x \\ p \textup{ ramified}}} 
\frac{\Lambda_\A(n)\log (x/n)}{n^{\sigma_0} \log n \log x}\right|
\ll (\log\R)^{1/2},
$$where we have used that $\left|\frac{\Lambda_\A(p^k)}{\log p}\right| \ll p^{k/2}$,
and that the number of ramified primes is $\ll \log \R$.  
As before we have
\begin{eqnarray*}
\log |L(\sigma_0)| 
&\ll& \left|\sum_{n\leq x} \frac{\Lambda_\A(n)\log (x/n)}{n^{\sigma_0} \log n \log
x}\right| + \frac{\log \R}{\log x} + 1\\
&\ll& \sum_{\substack{n=p^k\leq x \\ p \textup{ unramified}}} 
\left|\frac{\Lambda_\A(n)\log (x/n)}{n^{\sigma_0} \log n \log x}\right| + (\log
\R)^{1/2} + \frac{\log \R}{\log x}\\
&\leq& \left( \sum_{n\leq x} \frac{|\Lambda_\A(n)|^2}{n^{\sigma_0} \log^2 n}
\right)^{1/2}\left( \sum_{p^k\leq x} \frac{1}{p^{k\sigma_0}} \right)^{1/2} + (\log
\R)^{1/2}+\frac{\log \R}{\log x}\\
&\leq& \left( \sum_{n\leq x} \frac{\Lambda_{\A\times \bar{\A}}(n)}{n^{\sigma_0} \log
n} \right)^{1/2}\left( \sum_{p^k\leq x} \frac{1}{p^{k\sigma_0}} \right)^{1/2} +
(\log \R)^{1/2}+\frac{\log \R}{\log x}\\
&\ll& (\log |L(\sigma_0, {\A \times \bar{\A}})|\log \log x)^{1/2} + \frac{\log
\R}{\log x}+(\log \R)^{1/2}\\
&\ll& \left( \left(\frac{\log \R}{\log \log \R} - k\log (\sigma_0 -1)\right)\log
\log x\right)^{1/2} + \frac{\log \R}{\log x}+(\log \R)^{1/2},
\end{eqnarray*}where $k \leq d^2$ is the order of the pole of $L(s, \A\times
\bar{\A})$ at $1$, and where we have used Theorem \ref{main} to bound $\log
|L(\sigma_0, {\A \times \bar{\A}})|$.  We set $x = \exp (\sqrt{\log \R})$.  Then
$\log \log x = 1/2 \log \log \R$, $- k\log (\sigma_0 -1) \asymp \log \log \R$, and so
$$L(\sigma_0) \ll \exp (C\sqrt{\log \R}).
$$ The result follows by Lemma 4 in the same way as in the proof of Theorem
\ref{main}.  \footnote{Note here that we are applying Lemma 4 to $L(s, \A)$.}
\end{proof}
\subsection{Upper Bounds and Symmetric Powers}
$\\$
The proof of Corollary \ref{sympower} below is essentially the same as the proof of
Theorem \ref{impupper}, where instead of using Cauchy's inequality, we will use
H\"{o}lder's inequality.
\begin{proof}

Let $\mathcal{B}$ be the $l^{th}$ symmetric power of $\A$.  We have
$$L(s, \A) = \prod_{p} (1-\alpha_1(p)/p^s)^{-1}(1-\alpha_2(p)/p^s)^{-1} =
\sum_{n\geq 1}\frac{a(n)}{n^s},
$$and
$$L(s, \mathcal{B}) = \prod_{p} \prod_{j=1}^l
\left(1-\frac{\alpha_1(p)^j\alpha_2(p)^{l-j}}{p^s} \right)^{-1} = \sum_{n\geq
1}\frac{b(n)}{n^s},
$$where for convenience we have ignored that there are finitely many exceptional
Euler factors correponding to ramified primes.  Let $\mathcal{D} = \mathcal{B}
\times \bar{\mathcal{B}}$, and 
$$L(s, \mathcal{D}) = \sum_{n\geq 1}\frac{d(n)}{n^s},
$$
Assume without loss of generality that $|\alpha_1(p)| \geq 1 \geq |\alpha_2(p)|$ for
all $p$.  Say that $p$ is not an exceptional prime.  Let $M = |\alpha_1(p)| =
1/|\alpha_2(p)|$.  We thus have
$$|a(p^k)| \leq 2|\alpha_1(p)|^k = 2 M^k.
$$  Then
\begin{eqnarray*}
|b(p^k)| &=& \left|\sum_{i=0}^l (\alpha_1(p)^{l-i}\alpha_2(p)^i)^k \right|\\
&\geq& M^{kl} - \sum_{i=1}^l M^{k(l-2i)}  \\
&\geq& M^{kl} (1-1/M^k).
\end{eqnarray*}  
If $M^k \geq 2$, then $|a(p^k)|^l \leq 2^{l+1} |b(p)|^k$, and if $M^k \leq 2$, then
$|a(p^k)|^l \leq 2^{2l}.$
Hence
$$|a(p^k)|^l \leq \max(2^{l+1} |b(p^k)|, 4^l).$$ 
Since $|b(p^k)|^2 \leq d(p^k)$, we have
\begin{equation}\label{coeffboundeqn}
|a(p^k)|^{2l} \ll 4^{l} (d(p^k) +4^l)
\end{equation} where the implied constant is absolute.  With $\sigma_0$ as in Lemma
2, we have by use of Lemma 4 as in the proof of Theorem \ref{impupper} that it
suffices to bound $L(\sigma_0)$.  \footnote{Note that here we are applying Lemma 4
to $L(s, \A)$, an L-function of degree $2$, and thus the use of Lemma 4 does not
introduce any dependence on $l$.}  We have
\begin{eqnarray*}
\log |L(\sigma_0)| 
&\ll& \sum_{p^k\leq x} \left|\frac{a(p^k)\log (x/n)}{k p^{k\sigma_0} \log x}\right|
+ \frac{\log \R}{\log x}\\
&\ll& \Re \sum_{\substack{p^k\leq x \\ p \textup{ unramified}}} \frac{|a(p^k)|\log
(x/n)}{k p^{k\sigma_0} \log x} + (\log \R)^{1/2l} + \frac{\log \R}{\log x},
\end{eqnarray*}
where we have used that the number of exceptional primes is $\ll \log \R$, and that
$|a(p^k)| \ll |p^{k/2l}|$ which follows from (\ref{coeffboundeqn}).  Set $x = \R$. 
Again by (\ref{coeffboundeqn}) and H\"{o}lder's inequality, we have
\begin{eqnarray*}
\Re \sum_{\substack{p^k\leq x \\ p \textup{ unramified}}} \frac{|a(p^k)|\log
(x/n)}{k p^{k\sigma_0} \log x}
&\ll& \left(\sum_{p^k\leq x} \frac{d(p^k)+1}{p^{k\sigma_0}}\right)^{1/2l} (\log \log
x)^{\frac{2l-1}{2l}}\\
&\ll& \left( \log|L(\sigma_0, \mathcal{D})| - \log (\sigma_0 - 1) + \log\log x
\right)^{1/2l} (\log \log \R)^{\frac{2l-1}{2l}}\\
&\ll& \left((l+1)^4\frac{\log \R }{\log \log \R}\right)^{1/2l} (\log \log
\R)^{\frac{2l-1}{2l}}\\
&=& (\log \R)^{1/2l} (\log \log \R)^{\frac{l-1}{l}}
\end{eqnarray*}
where yet again we have used Theorem \ref{main} to bound $\sum_{p^k\leq x}
\frac{d(p^k)}{p^{k\sigma_0}}$.  Note that the degree of $L(s, \mathcal{D})$ is
$(l+1)^2$, and $\log \R_\mathcal{D} \ll (l+1)^2 \log \R_\mathcal{A}$.

\end{proof}

The proof of Corollary \ref{examplecor} is the same so we provide a sketch only. 
\begin{proof}
Again, say that
$$L(s, \A) = \prod_{p} (1-\alpha_1(p)/p^s)^{-1}(1-\alpha_2(p)/p^s)^{-1} =
\sum_{n\geq 1}\frac{a(n)}{n^s}.
$$Then
$$L(s, \sym^2 \A) = \prod_{p} \prod_{j=1}^2
\left(1-\frac{\alpha_1(p)^j\alpha_2(p)^{l-j}}{p^s} \right)^{-1} = \sum_{n\geq
1}\frac{s(n)}{n^s},
$$where again for convenience we have ignored that there are finitely many
exceptional Euler factors correponding to ramified primes.  Let $B = \sym^4\A$ so
that 
$$L(s, B) = \prod_{p} \prod_{j=1}^4
\left(1-\frac{\alpha_1(p)^j\alpha_2(p)^{l-j}}{p^s} \right)^{-1} = \sum_{n\geq
1}\frac{b(n)}{n^s}.
$$
Let $\mathcal{D} = \mathcal{B} \times \bar{\mathcal{B}}$, and 
$$L(s, \mathcal{D}) = \sum_{n\geq 1}\frac{d(n)}{n^s}.
$$By the same method as before, we may show that for $n = p^k$ a prime power,
$$|s(n)|^4 \ll |a(n)|^8 \ll |b(n)|^2 \leq d(n),
$$and applying Theorem \ref{main} as before, we obtain 
$$L(1, \sym^2 \A)\ll\exp \left( C(\log \lambda N)^{1/4} (\log\log\lambda
N)^{\frac{1}{2}} \right).
$$  The result follows for $\textup{res}_{s=1} L(s, \A\times \A)$ since the local
factors of $L(s, \A\times \A)$ agree with the the local factors of $\zeta(s) L(s,
\sym^2 \A)$ save at $\ll \log \lambda N$ exceptional primes, which can be neglected
as before.
\end{proof}

\subsection{Proof of Corollary 5 and Corollary 6}
$\\$
The proof of Corollary \ref{shortsum} is also very similar to that of Theorem
\ref{impupper}.  In this proof, we have ignored the ramified primes; the method to
deal with them is the same as in the proof of Theorem \ref{impupper} and Corollary
\ref{sympower}.
\begin{proof}
We have 
\begin{eqnarray*}
\log \sum_{n\leq x} \frac{|a_n|}{n} 
&\leq& \sum_{n\leq x} \frac{|\Lambda_\A(n)|}{n\log n}\\
&\leq& e \sum_{n\leq x} \frac{|\Lambda_\A(n)|}{n^{1+1/\log x}\log n}\\
&\leq& e \left(\sum_{n\leq x} \frac{|\Lambda_\A(n)|^2}{n^{1+1/\log x}\log^2
n}\right)^{1/2} (\log \log x)^{1/2}\\
&\leq& e \log L(1+1/\log x, \A\times \bar{\A})^{1/2} (\log \log x)^{1/2}\\
&\ll& \left( \frac{\log \R}{\log \log \R} + k\log \log x \right)^{1/2} (\log \log
x)^{1/2},
\end{eqnarray*}by Theorem \ref{main}, where $k\leq d^2$ is the order of the pole of
$L(s, \A \times \bar{\A})$ at $s=1$.  The above is $\ll \sqrt{\log \R}$ if $\R\geq
x$ and $\ll \sqrt{\log x}$ otherwise.
\end{proof}

It is clear from the above proof that the stronger bound
$$\sum_{n\leq x} \frac{|a_n|}{n} \ll (\log x)^k \exp\left(C\sqrt{\frac{\log \R}{\log
\log \R}}\right)$$ holds.   As expected, this is actually $\ll \log^{k+\epsilon} x$
for $x$ much larger than $\R$.  We now proceed to  the proof of Corollary
\ref{derivative}, which we will only sketch since it is the same proof as Theorem
\ref{main} in \cite{Mol}.  

\begin{proof}
By Corollary \ref{shortsum}, 
$$\sum_{n\leq x} \log^j n \frac{|a_n|}{n} \ll \log^j x \max(\exp (C\sqrt{\log \R}),
\exp (C\sqrt{\log x})).
$$Define
$$I_j(x) = \frac{1}{2\pi i} \int_{2-i\infty}^{2+i\infty}
\frac{f^{(j)}(s+1)x^s}{s(s+1)...(s+r)} ds,
$$where $r$ is chosen so that the integral converges.  We have that
$\frac{d^j}{ds^j}\left(\frac{s^m}{n^s}\right) = \frac{P_{j, m}(x, \log x)}{n^s}$ 
for a polynomial $P_{j, m}(x, y)$ which is bounded by $m^{j+1}(j+1)x^my^j$.  Then
proceeding as Molteni does, we expand $f^{(j)}(s+1)$ as a series and use a version
of Perron's formula to get that
$$I_j(x) \ll \sum_{n\leq x} \log^j n \frac{|a_n|}{n} \ll \log^j x \max(\exp
(C\sqrt{\log \R}), \exp (C\sqrt{\log x})).
$$

Moreover, by bounding $f^{(j)}(s)$ by $(\R(1+|t|))^c$ using convexity and moving the
integral to the line $\Re s = -1/2$, we get that
$$I_j(x) = \frac{f^{(j)}(1)}{r!} + O \left( \frac{j!\R^{c}}{2^j\sqrt{x}} \right).
$$From this it follows upon choosing $x = \R^{2c + 2}$ that
$$f^{(j)}(1) \ll \exp (C'\sqrt{\log \R}),
$$and we are done.
\end{proof}

\section{Lower Bounds}\label{lowerboundssec}
Part of the original motivation of Molteni's result is to derive Siegel type lower
bounds for $L(1)$.  He showed for specific $L$-functions that $L(1) \gg \frac{1}{\R
^\epsilon}$ (see \cite{Mol} for a more detailed description).  We do not seek to
improve this bound here in the case where $L(s)$ may have a Siegel zero - recall
that such an improvement is extremely difficult even in the case of quadratic
Dirichlet $L$-functions.  Rather, we will examine the case where $L(s)$ has been
proved to have no exceptional zero.

\begin{corol4}\label{lowerbound}
Let $L(s, \A) = L(s, \pi)$ be cuspidal automorphic for $GL(d)$, and assume that
there exists $c_0>0$ such that any zero $\rho = \beta + i\gamma$ of $L(s)$ inside
the critical strip satifies
$$\beta \leq 1 - \frac{c_0}{\log(1+\R |\gamma|)}.
$$Then
$$L(1, \A) \gg \exp (-C \sqrt{\log \R}),
$$where $C>0$ is a constant depending only on $d$.
\end{corol4}

The result above applies to obtain lower bounds on $L(1)$ where $L$ is not self-dual
and automorphic for $GL(m)$ (see Theorem 5.10 of \cite{IK}).  It also applies to the
case of $L(1, f)$ where $f$ is a self-conjugate cuspidal Hecke eigenform on $GL(2)$
by the work of Hoffstein and Ramakrishnan \cite{hofframa} and to $L$-functions of
any cusp form on $GL(3)$ by the work of Banks \cite{ba}.

We obtain lower bounds on $L(s, \A)$ by again expressing $\frac{L'}{L}(s)$ as a sum
over primes and a sum over zeros.  This time however, we will need to genuinely
bound the size of the contribution of the zeros, hence requiring a zero free region.
 We will then bound the contribution of the prime sum using our upper bound results.
 The following Lemma is the same as Lemma 2 of \cite{HoloSound}.

\begin{lemma5}
Let $L(s, \A) = L(s, \pi)$ be automorphic of degree $d$, and assume that there
exists $c_0>0$ such that any zero $\rho = \beta + i\gamma$ of $L(s)$ inside the
critical strip satifies
$$\beta \leq 1- \frac{c_0}{\log (1+\R |\gamma|)}.
$$Let $x = \R^{4\log \log \R /c_0}$.  Then 
$$-\frac{L'}{L}(\sigma) 
= \sum_{n\leq x} \frac{\Lambda_{\A}(n)}{n^s} \left(1-
\left(\frac{n}{x}\right)^2\right) + O(1),
$$uniformly for $1 \leq \sigma < 2$, where the implied constant depends only on $d$.
\end{lemma5}

\begin{proof}
Now fix $\sigma$ with $1 \leq \sigma < 2$.  

Let $c>1$.  We have that
\begin{equation}\label{lowerboundlemmaeqn}
-\frac{1}{2\pi i} \int_{c-i\infty}^{c+i\infty} \frac{L'}{L}(s+\sigma) \frac{2 
x^s}{s(s+2)}ds 
= \sum_{n\leq x} \frac{\Lambda_{\A}(n)}{n^s} \left(1-
\left(\frac{n}{x}\right)^2\right).
\end{equation}
As in the proof of Lemma 1, we shift our contours of integration to $-A$, where $A$
is chosen such that $3/2 \leq A < 2-1/10$ and $L(s)$ has no trivial zeros near the
line $\Re s = -A$.  In particular, we may demand by the pigeonhole principle that
all trivial zeros are at least $\gg 1/d$ away from $-A$.  We encounter a pole at
$0$, and poles at the non-trivial zeros of $L(s)$, and $\ll d$ trivial zeros of
$L(s)$.  We claim that the contribution of the residues from the trivial zeros is
$O(d)$.  To see this, note that each trivial zero is of the form $-2(\beta_i + k)$
for some $1\leq i\leq d$ and integer $k\geq 0$.  We know that $\Re 2 \beta_i \geq
-1/2 + \frac{1}{d^2+1}$ by a result of Luo, Rudnick and Sarnak \cite{LRS}, so that
$-\Re 2(\beta_i + k) \leq-\Re 2 \beta_i\leq 1/2$.  The residue at such a zero is
$$\frac{x^{- 2(\beta_i + k) - \sigma}}{ (\sigma - 2(\beta_i + k))(\sigma - 2(\beta_i
+ k) + 2)}.
$$    Thus $|\sigma - 2(\beta_i + k)| \geq 1/2$.  Moreover, $|\sigma - 2(\beta_i +
k) + 2| \geq 5/2$.  Thus the term above is $O(1)$, and since the number of trivial
zeros between $-A$ and $c$ is $\ll d$, the contribution of all such residues is
$O(d)$ also.

Thus the left hand side of (\ref{lowerboundlemmaeqn}) is
\begin{equation*}
-\frac{L'}{L}(\sigma) - \frac{1}{2\pi i} \int_{-A-i\infty}^{-A+i\infty}
\frac{L'}{L}(s+\sigma) \frac{2x^s}{s(s+2)}ds + O\left( \sum_\rho \frac{x^{\beta -
\sigma}}{|\rho - \sigma||\rho - \sigma + 2|} + d\right).
\end{equation*}
  The integral appearing immediately above may be bounded by bounding
$-\frac{L'}{L}(s+\sigma)$ for $\Re s = -A$.  Indeed, using the functional
equation, and logarithmically differentiating the $\Gamma$ factors, we derive as
usual that $\frac{L'}{L}(s+\sigma) \ll d^2\log (\R(1+|t|))$. \footnote{The $d^2$
factor here arises from the fact that the trivial zeros are $\gg 1/d$ from $-A$
and $d$ is the number of those zeros in an interval of length $2$.  The argument
is a modification of the standard argument on pg. 109 of \cite{Da} for instance.} 
Thus,
$$\frac{1}{2\pi i} \int_{-A-i\infty}^{-A+i\infty} \frac{L'}{L}(s+\sigma) \frac{2
x^s}{s(s+2)}ds \ll d^2 x^{-A}\log \R
$$
Then (\ref{lowerboundlemmaeqn}) gives us
\begin{equation*}
-\frac{L'}{L}(\sigma) = \sum_{n\leq x} \frac{\Lambda_{\A}(n)}{n^s} \left(1-
\left(\frac{n}{x}\right)^2\right) 
+ O\left( \sum_\rho \frac{x^{\beta - \sigma}}{|\rho - \sigma||\rho - \sigma +
2|}\right)+ O(d+d^2x^{-A}\log \R).
\end{equation*}
Now to bound the contribution of the non-trivial zeros, we split the sum into
intervals where $m \leq |\gamma| < m+1$ for $m\geq 0$.  The number of such zeros is
$O(\log \R(m+1))$, and since they lie outside the zero free region, for each $m >
0$, they contribute
$$\ll x^{-\frac{c_0}{\log (\R (1+m))}}  \frac{\log (\R (m+1)) }{m^2}.
$$The term $m = 0$ is of size $\ll x^{-\frac{c_0}{\log \R }} \log^2 \R$.  Thus, the
entire sum over zeros is 
$$\ll x^{-\frac{c_0}{2\log \R}} \log ^2 \R + 1.
$$We set $x = \R^{4\log \log \R /c_0}$ so that the above is $\ll 1$.  Then we have
\begin{eqnarray*}
-\frac{L'}{L}(\sigma) 
&=& \sum_{n\leq x} \frac{\Lambda_{\A}(n)}{n^s} \left(1-
\left(\frac{n}{x}\right)^2\right) + O(1),
\end{eqnarray*}as desired.
\end{proof}

To prove Corollary \ref{lowerbound}, we integrate the expression in Lemma 5.  

\begin{proof}
Set $\sigma_1 = 1 + \exp (-C_0\frac{\log \R}{\log \log \R})$.  
Integrating the expression in Lemma 5 for $\sigma$ from $\sigma_1$ to $2$ gives
\begin{eqnarray*}
|\log L(\sigma_1)|
&=& \sum_{n\leq x} \frac{\Lambda_{\A}(n)}{n^{\sigma_1}\log n} \left(1-
\left(\frac{n}{x}\right)^2\right)| + O(1)\\
&\leq& \sum_{n\leq x} \left|\frac{\Lambda_{\A}(n)}{n^{\sigma_1} \log n }\right| + O(1)\\
&\leq& \left(\sum_{n\leq x} \frac{\Lambda_{\A\times\bar{\A}}(n)}{n^{\sigma_1} \log n
}\right)^{1/2} (\log \log x)^{1/2} + O(1)\\
&\leq& \left( \left(\log L_{\A \times \bar{\A}}(\sigma_1) + \log
\frac{1}{\sigma_1-1} \right) \log \log x  \right)^{1/2} + O(1),
\end{eqnarray*}exactly as in the proof of Theorem \ref{impupper}, where we have used that $L_{\A \times \bar{\A}}$ has a simple pole at $1$.  Using the definition of $\sigma_1$ and also the bound for $\log L_{\A \times \bar{\A}}(\sigma_1)$ from Theorem \ref{main}, we obtain that
$$|\log L(\sigma_1)| \ll \sqrt{\log \R},
$$which implies the lower bound we want at $\sigma_1$.  We use our bound for
$L'(\sigma)$ to push this bound to $s=1$.  Specifically, by Corollary
\ref{derivative}, there exists constants $C_1$ and $C_2$ such that
\begin{eqnarray*}
|L(1)| &\geq& |L(\sigma_1)| - C_1\exp(C_2\sqrt{\log \R}) (\sigma_1 - 1)\\
&=& |L(\sigma_1)| + O\left( \exp(-C_2\frac{\log \R}{\log \log \R}) \right)\\
&\gg& \exp(-C\sqrt{\log \R}),
\end{eqnarray*}as desired, upon setting $C_0 = 2C_2$.
\end{proof}

\begin{rmk2}
The bound proved above can be improved given more information about $L(s)$.  For
instance, results analogous to Corollary \ref{sympower} can be shown using the same
methods.  To be more precise, we can show for $\A$ automorphic for $GL(2)$ over
$\mathbb{Q}$\: that
\begin{equation}
L(1, \A) \gg \exp(-C (\log \R)^{1/8} (\log \log \R)^{3/4}).
\end{equation}  One may show a similar result for the symmetric square.  The proof
is immediate.  By integrating Lemma 5, it suffices to bound the prime sum
$\sum_{n\leq x} \left| \frac{\Lambda_\A(n)}{n^s \log n}\right|$, which is bounded as
in the proof of Corollary \ref{sympower}.  
\end{rmk2}

A standard method to obtain lower bounds from upper bounds depends on constructing
an auxilary L-function $T(s)$ with positive coefficients such that $L(s, \A)$
divides $T(s)$ to a higher power than the pole of $T(s)$ at $s=1$.  In the cases
that this method is available, a zero free region can also be proved, so the method
we used is more general.  

In certain cases, our method applies when the method using auxilary $L$-functions
runs into difficulties.  For instance, in the case of self-dual $GL(2)$
$L$-functions, one may use the auxilary L-function defined in Equation (5.1) in the work of
Hoffstein and Ramakrishnan \cite{hofframa}.  The difficulty with this approach is
that one of the factors in this auxilary L-function is not known to be automorphic,
and thus we cannot obtain upper bounds of sufficient strength to prove Corollary
\ref{lowerbound}.  Using this strategy, only lower bounds of the form $\gg \exp(
-\frac{\log \R}{\log \log \R})$ can be obtained.  On the other hand, the same work
\cite{hofframa} shows that such $L$-functions have the requisite zero free region,
so our result immediately gives that $L(1) \gg \exp( -C\sqrt{\log \R})$.  Actually,
it is not hard to see that bounds of the form $\gg_\epsilon \exp( -C(\log \R)^{1/8 +
\epsilon})$ hold as per the Remark above.  

\section{Some remarks on GRH}\label{GRHsec}

Here, we briefly discuss the immediate consequences of GRH in our setting.  Consider 
\begin{eqnarray*}
-\frac{L'}{L}(\sigma) 
&=& \sum_{n\leq x} \frac{\Lambda_{\A}(n)}{n^s} \left(1-
\left(\frac{n}{x}\right)^2\right) + O\left( \sum_\rho \frac{x^{\beta -
\sigma}}{|\rho - \sigma||\rho - \sigma + 2|}\right) \\
&+& O(1+x^{-A}\log \R),
\end{eqnarray*}
which appears in the proof of Lemma 5.  When we proved Lemma 5, we bounded the sum
over zeros by using the classical zero free region.  Here, we once again split the
sum into intervals where $m \leq |\gamma| < m+1$ for $m\geq 0$.  The number of such
zeros is $O(\log \R(m+1))$, and since they lie on the $\Re s = 1/2$ line by GRH, for
$m > 0$, they contribute
$$\ll x^{-1/2}  \frac{\log (\R (m+1)) }{m^2}.
$$The term $m = 0$ has size $\ll x^{-1/2} \log \R$.  Thus, the entire sum over zeros
is 
$$\ll x^{-1/2} \log \R + 1.
$$Now we are free to set $x = \log^2\R$ so that the above is $\ll 1$.  Then we have
\begin{eqnarray*}
-\frac{L'}{L}(\sigma) 
&=& \sum_{n\leq x} \frac{\Lambda_{\A}(n)}{n^s} \left(1-
\left(\frac{n}{x}\right)^2\right) + O(1),
\end{eqnarray*}as before, but the sum over primes is now much shorter. An
integration as before gives 
\begin{eqnarray*}
\log L(\sigma_1)
&=& \sum_{n\leq x} \frac{\Lambda_{\A}(n)}{n^{\sigma_1}\log n} \left(1-
\left(\frac{n}{x}\right)^2\right) + O(1)
\end{eqnarray*}

This is very much the classical approach to bounding $L(1)$ assuming GRH, and it is
easy to see that if the coefficients $\frac{\Lambda_{\A}(n)}{\log n} \ll 1$ - as in
cases where Ramanujan holds - then the above would be $\ll \log \log \R$.  However,
without Ramanujan, if we have only that $\frac{\Lambda_{\A}(p)}{\log p} \ll
p^\delta$ for all prime $p$ and for some $\delta > 0$, then we can only prove that
\begin{equation}\label{GRHnorambound}
\exp\left(-Cd \frac{(\log \R)^{2\delta}}{\log \log \R}  \right) \ll L(1) \ll
\exp\left(Cd \frac{(\log \R)^{2\delta}}{\log \log \R}  \right).
\end{equation}While it is much easier to prove such a bound assuming GRH than
without it, we see that these bounds are not superior to our unconditional bounds. 
In fact, the bounds in (\ref{GRHnorambound}) are actually somewhat inferior if we
substitute in the best known values of $\delta$.  Although our analysis above is not
detailed, it nevertheless indicates that assuming GRH does not seem to immediately
lead to improvements on our result.  
$\\$
\paragraph{Acknowledgements:} I am very grateful to Professor Soundararajan for his
guidance throughout the making of this paper.  I also thank the referee for a
careful reading of the paper and helpful editorial remarks.


\begin{thebibliography}{9999}
\bibitem{ba} W. Banks, {\it Twisted symmetric square $L$-functions and the
nonexistence of Siegel zeros on $GL(3)$}, Duke Math. J. 87 (1997), 343-353.

\bibitem{BrConvex} F. Brumley, {\it Second order average estimates on local data of
cusp forms},  Arch. Math. (Basel)  87  (2006),  no. 1, 19-32.

\bibitem{BrMulOne} F. Brumley, {\it Effective multiplicity one on ${\rm GL}\sb N$
and narrow zero-free regions for Rankin-Selberg $L$-functions}, Amer. J. Math. 128 
(2006),  no. 6, 1455-1474.

\bibitem{Da} H. Davenport, {\it Multiplicative Number Theory}, vol.74,
Springer-Verlag (GTM), 2000.

\bibitem{HoffLock} J. Hoffstein and P. Lockhart {\it Coefficients of Maass Forms and
the Siegel Zero}, Ann. of Math. (2), 140, (1994), 161-176.

\bibitem{GHL} D. Goldfeld, J. Hoffstein, and D. Lieman, {\it An effective Zero Free
Region}, appendix to \cite{HoffLock}

\bibitem{hofframa} J. Hoffstein and D. Ramakrishnan, {\it Siegel zeros and cusp
forms}, Internat. Math. Res. Notices (1995) no. 6, 279-308.

\bibitem{HoloSound} R. Holowinsky and K. Soundararajan, {\it Mass equidistribution
for Hecke eigenforms}, preprint at arXiv:0809.1636v1.

\bibitem{I1} H. Iwaniec, {\it Small eigenvalues of the Laplacian on $\Gamma_0(N)$},
Acta Arith. 56. (1990), 65-82.

\bibitem{I2} H. Iwaniec, {\it The spectral growth of automorphic $L$-functions}, J.
Reine Angew. Math. 428 (1992), 139-159.

\bibitem{IK} H. Iwaniec and E. Kowalski, {\it Analytic Number Theory}, Colloquium
Publications, vol. 53, American Mathematical Society, Providence, 2004.

\bibitem{JPS} H. Jacquet; I. I. Piatetskii-Shapiro; J. A. Shalika, {\it
Rankin-Selberg convolutions}, American J. Math. 105 (1983), 367-464.

\bibitem{Kim} H. Kim, {\it Functoriality for the exterior square of $GL_4$ and the
symmetric fourth of $GL_2$}, Journal of the Amer. Math. Soc., 16 (2003), no. 1,
139-183.

\bibitem{KoMi} E. Kowalski and P. Michel, {\it Zeros of families of automorphic
$L$-functions close to 1}.  Pacific J. Math. 207  (2002),  no. 2, 411-431.

\bibitem{LRS} W. Luo, Z. Rudnick and P. Sarnak, {\it On Selberg's eigenvalue
conjecture}, Geom. and Func. Analysis, 5 (1995), 387--401. 

\bibitem{Mol} G. Molteni, {\it Upper and lower bounds at s=1 for certain Dirichlet
series with Euler product,} Duke Math. J. 111 (2002), 133-158.

\bibitem{RudSar} Z. Rudnick and P. Sarnak, {\it Zeros of principal $L$-functions and
random matrix theory}, Duke Math. J. 81 (1996), 269-322.

\bibitem{Selberg1} A. Selberg, "On the normal density of primes in small intervals,
and the difference between consecutive primes" in {\it Collected Papers, Vol. 1},
Springer-Verlag, Berlin, 1989, 160-178.

\bibitem{Selberg2} A. Selberg, "Old and new conjectures and results about a class of
Dirichlet series" in {\it Collected Papers, Vol. 2}, Springer-Verlag, Berlin, 1991,
47-65.

\bibitem{Sound} K. Soundararajan, {\it Moments of the Riemann zeta-function},
preprint at arXiv:math/0612106v2. 

\end{thebibliography}
\end{document}